%% This template should be used with the copernicus.cls or the copernicus2.cls class files.
%% The Copernicus_Latex_Manual.pdf contains detailed explanations regarding the comments.
%% The Copernicus Latex Package can be downloaded from the different journal webpages.
%% For further assistance please contact the Publication Production Office (production@copernicus.org).
%% http://publications.copernicus.org

%% Differing comments regarding the copernicus2.cls are highlighted.
\documentclass[npg]{copernicus}

\usepackage{amsmath,amsthm,amssymb}
\usepackage{calc}
\usepackage{natbib, color, graphicx}

\newcommand{\reels}{\mathbb R}

\newcommand{\Esp}{\mathbb E}

%% only used for copernicus2.cls
%\documentclass[journal abbreviation]{copernicus2}

\begin{document}

\title{Detecting spatial patterns with the cumulant function.\\
Part I: The theory}

\author[1]{Alberto Bernacchia}
\author[2]{Philippe Naveau}
%\author[]{NAME}

\affil[1]{Dipartimento di Fisica E.Fermi, Universita' La Sapienza, Roma, Italy}
\affil[2]{Laboratoire des Sciences du Climat et de l'Environnement, IPSL-CNRS,  France}

%% The [] brackets identify the author to the corresponding affiliation, 1, 2, 3, etc. should be inserted.

\runningtitle{Detecting spatial patterns with the cumulant function}

\runningauthor{Bernacchia and Naveau}

\correspondence{Alberto Bernacchia\\ ({\tt a.bernacchia@gmail.com})}

\received{}
\pubdiscuss{} %% only important for two-stage journals
\revised{}
\accepted{}
\published{}

%% These dates will be inserted by the Publication Production Office during the typesetting process.

\firstpage{1}

\maketitle

\begin{abstract}
In climate studies, 
detecting spatial patterns
 that largely deviate from the sample mean 
 still remains  a statistical challenge.
 Although  a Principal Component Analysis (PCA), or equivalently a Empirical Orthogonal Functions (EOF) decomposition, is often applied on this purpose, it
 can only provide  meaningful results   if the underlying multivariate  distribution is Gaussian.
Indeed, PCA is based on optimizing second order moments quantities  and the covariance matrix can only  capture the full dependence structure  for multivariate Gaussian vectors. 
Whenever the application at hand can not satisfy this normality hypothesis (e.g. precipitation data), alternatives and/or improvements to  PCA  have to be developed and studied. 

To go beyond this second order statistics constraint that limits the applicability of the PCA, we take advantage of the cumulant function that can produce higher order moments information. 
This cumulant function, well-known in the statistical literature,  allows us to propose  a new, simple and fast procedure to identify spatial patterns for non-Gaussian data. 
Our algorithm consists in maximizing the cumulant function.
To illustrate our approach, its implementation for which explicit computations are obtained is performed on three family of of  multivariate random vectors. 
In addition, we show that our algorithm corresponds to selecting the directions along
which projected data display the largest spread over the marginal probability density tails. 
\end{abstract}

%% only used for copernicus2.cls
%\abstract{
% TEXT
 \keywords{Pattern Analysis, Cumulant Function, Multivariate Gaussian, Skew-normal and Gamma vectors}
\section{Introduction}
In geosciences,  
Principal Component Analysis (PCA) has been  an essential  and powerful tool at detecting
spatial structures amongst  time series recorded at different locations. 
PCA aims at building a decorrelated
representation of the data and it finds the spatial patterns  responsible for
the largest proportion of variability \cite{Rencher98}. PCA can also be viewed as 
a dimensionality reduction technique that extracts the
most relevant components of data, i.e. the ones that maximize the variances. 
Hence, second order moments are the foundation of PCA. 
But relying exclusively on second moments implies that PCA is only optimal when applied to multivariate Gaussian vectors. 
Although rarely stated and even more rarely checked, this underlined normality assumption is not always satisfied in practice. 

Recently, different approaches have been tested to extend the applicability of PCA in geosciences.
In particular,  
NonLinear PCA (NLPCA) has  been
 applied to several geophysical datasets
  \citep[e.g.,][]{Hsieh04,Monahan01}. In the NLPCA algorithm, data are considered as the input of an
 auto-associative neural network with five layers, with a bottleneck
 in the third layer \cite{Kramer91}. 
Through the minimization of a cost function, the output is forced to
 be as close as possible  to the input, and the bottleneck layer
 is a low dimensional representation of the input. Since this neural network
 is nonlinear, NLPCA goes automatically beyond correlations.
However, NLPCA suffers the intrinsic limitations of multilayered
networks \citep[e.g.,][]{Christiansen05,Malthouse98}: it
is computationally expensive and does not always  converge to a global
solution.  
To overcome these difficulties, we pursue a less
ambitious aim: instead of trying to find decompositions that can explain  
 the entire body of data with respect to a criterion, we focus on
the part of data responsible for large anomalous behaviors.

In contrast to PCA, our approach tends  to give maximal weight to data
 points which largely deviate from the mean, and to find the
 corresponding representative spatial patterns, i.e. the directions
 along which such points are prominently distributed. 
 The key element in our procedure is the expansion of the cumulant function that 
 can provide information beyond the first two moments. 
By maximizing the cumulant function \citep{Kenney51} over growing hyperspheres in
the data  space, a set of  components can be derived.
We first apply our procedure to three types of multivariate random vectors
(Normal, Skew-Normal, Gamma).
 Then we demonstrate in the general case that
  if a direction exists for which the marginal probability density of projected data display a larger tail than in all other  directions, then our procedure is able to select that direction.

When finding the first component, we show that PCA is a special case of our approach  whenever the Gaussian assumption is satisfied.
Besides the Gaussian case we show that, for any probability density, the
 solution derived from the centered cumulant function can be transformed to the first principal
component by decreasing the radius of the hypersphere.
Other principal components could be found as well, by a generalization
of the proposed method.
Our method is computationally cheap, and the solutions are found in
the form of unit (normalized) vectors, as in the case of PCA, allowing
a unidimensional projection with an easy geometrical interpretation. 

In summary, this paper focuses on the problem of characterizing spatial
patterns associated to large anomalies, i.e. large deviations from the
sample mean, when the data set under study cannot be assumed to be  normally distributed.

\section{Maximizing the cumulant function}
\label{sec: cumul fct}
In the univariate case, the cumulant function of the random variable $X$ with finite moments is defined as the following scalar function 
$$
 \log \Big\{ \Esp \big[ \exp( s X)\big]\Big\}= \sum_{n=1}^{\infty}\kappa_n   { s^n \over n!},
$$
 where $s \in \reels$, $\Esp(.)$ represents the mean function and the scalar $\kappa_n$ corresponds to the $n^{th}$ cumulant of $X$. 

The first two cumulants $\kappa_1$ and $\kappa_2$ are simply the mean
and the variance of $X$, respectively. 
The third and fourth cumulants are classically called 
the skewness and the kurtosis parameters.
Concerning the existence of cumulants, we assume in this paper that all the cumulant coefficients are finite and that the cumulant function is always well defined.
The cumulant function and its coefficients have many interesting properties. 
For example, if $X$ and $Y$ are  two independent random variables then 
the $n^{th}$ cumulant of the sum $X+Y$ is equal to the sum of 
the $n^{th}$ cumulant of  $X$ and the $n^{th}$ cumulant of  $Y$ for any integers $n$. 
If $X$ follows a Gaussian distribution, then all but the first two cumulants are equal to zero. 
In a multivariate framework, the cumulant function of the random vector 
${\bf X} = (X_1, \dots, X_m)^t$  is simply defined as 
\begin{equation}\label{eq: G def}
%G_{\bf X}({\mathbf s}) = 
\log \Big\{ \Esp \big[ \exp( {\bf s}^t {\bf X})\big]\Big\}
\mbox{, for all  ${\mathbf s}^t =(s_1, \dots, s_m) \in \reels^m$.}
\end{equation}
As in the univariate case, the linear and Gaussian properties associated to the cumulant function defined by 
(\ref{eq: G def}) still hold, but the cumulant coefficients formulas  are more cumbersome to write down in a multivariate framework.
For more information about cumulants, we refer the reader to \cite{Kenney51}.

To identify possible favorite projection directions with respect to the multivariate cumulant function, we first  rewrite the vector ${\mathbf s} =(s_1, \dots, s_m)^t$ in 
  (\ref{eq: G def}) as 
the  product  
$
{\mathbf s} = |{\mathbf s}| \times  \mbox{\boldmath \(\theta\)}
$, 
where $ |{\mathbf s}|^2 = \sum s_i^2$, the scalar $ |{\mathbf s}|$ represents the norm (``radius") of ${\mathbf s}$ 
and $\mbox{\boldmath \(\theta\)}$ is the unit ``angular/direction" vector defined as 
$\theta_i = s_i / |{\mathbf s}|$
(note that $\mbox{\boldmath \(\theta\)}^t \mbox{\boldmath \(\theta\)}=1$). 
Secondly,  the cumulant function  for our vector ${\bf X} = (X_1, \dots, X_m)^t$ projected  along
the direction  vector $\mbox{\boldmath \(\theta\)}$ is introduced  as 
\begin{eqnarray}\label{eq: projG def}
G_{|{\mathbf s}| }(\mbox{\boldmath \(\theta\)}) &=& 
\log \Big\{ \Esp \big[ \exp( |{\mathbf s}| \;  {\mbox{\boldmath \(\theta\)}}^t
  {\bf X}) \big] \Big\}, \nonumber\\
&=& \sum_{n=1}^\infty k_n(\mbox{\boldmath \(\theta\)})\frac{ |{\mathbf s}|^n}{n!}.
\end{eqnarray}
Our algorithmic strategy is to maximize the cumulant function at fixed
non-small $|{\mathbf s}|$ with respect
to the angular component $\mbox{\boldmath \(\theta\)}$ that varies over an
unit hypersphere.
Practically, we have to find the optimal $\mbox{\boldmath \(\theta\)}_s$ directions defined by 
\begin{equation}\label{eq: max pb}
\mbox{\boldmath \(\theta\)}_s = \mbox{argmax} \left[  
G_{|{\mathbf s}| }(\mbox{\boldmath \(\theta\)}) \mbox{ such that }
{\mbox{\boldmath \(\theta\)}^t\mbox{\boldmath \(\theta\)}=1}
\right],
\mbox{ for any $|{\mathbf s}|$.}
\end{equation}
If  the radius $|{\mathbf s}|$ is small enough, and the mean
$k_1$ is zero,
the variance $k_2(\mbox{\boldmath \(\theta\)})$ dominates the centered cumulant function  
and  the contributions of other cumulants can be
neglected.
In this situation, finding the first  PCA component can be viewed as a
special case of this optimization procedure,
because maximizing the cumulant function for small  $|{\mathbf s}|$ is equivalent to maximize  the
variance. 
As the value of $|{\mathbf s}|$ grows, higher and higher order cumulants become more  dominant.
Our main goal is to find  
$\mbox{\boldmath \(\theta\)}_s$ in (\ref{eq: max pb}) for the largest admissible $|{\mathbf s}|$ and study their properties. 
We will call the solutions of such an optimization scheme the {\it Maxima of the Cumulant Function} (MCF) directions.

We anticipate that, since the scalar product ${\bf \theta}^t {\bf X}$ is invariant under orthogonal
transformations, the cumulant function is invariant as well.
Given that the unit hypersphere is also invariant, our algorithm is symmetric
respect to orthogonal
transformations.
For instance, if data vectors are rotated by a given angle, the solutions
of the algorithm, in terms of $\mbox{\boldmath \(\theta\)}$, are rotated by the same amount.
This symmetry implies that if the probability density is isotropic, then the
cumulant function is isotropic as well, and no relative maxima of $G$ exist on
the unit hypersphere.
In that case no directions are selected: for a rotationally symmetric
distribution there is indeed no
preferred direction along which anomalies are prominent, they are
distributed uniformly over all angles.

To illustrate  our optimization procedure we derive explicit cumulant
maximization schemes for three special cases of multivariate  family
distributions, see Section \ref{sec: theory}.
For assessing the outputs  of our algorithm when applied to real data, we refer the reader to the second part of this paper  \cite{Bernacchia07}).

% Finally, in Section \ref{sec: mmdt}, we demonstrate that for any
% multivariate distribution, {\bf if a direction exist for which the marginal
%   probability density of projected data display the fattest tail, respect to a
%   local angle displacement, that direction is a solution of our algorithm.
% Hence, our algorithm provide the directions along which anomalies are mostly expected.}

\section{Theoretical examples}
\label{sec: theory}
To study the properties of the
maximization method developed in Section \ref{sec: cumul fct}, three examples
of distribution functions are considered in this paper.
We choose these three families because  explicit results can be
derived and they have been classically used in the statistical modeling of
temperatures and precipitation data.

Without loss of generality, some relevant matrices are diagonal  thereafter.
However, since the solutions are invariant respect to orthogonal
transformations, they may be rotated along with the corresponding coordinate change.
Analytical calculations are performed for the general multivariate case, while
figures are given for the bivariate case.
 
\subsection{Multivariate Gaussian vectors}
Suppose that the  data at hand can be appropriately fitted  by a multivariate Gaussian vector.
We assume that the observations have been centered (zero) mean and we denote
the covariance matrix as $\Sigma$.
The cumulant function of the centered Gaussian vector \citep[e.g.,][]{Kenney51} is equal to 
$$
\log \Big\{ \Esp \big[ \exp( {\bf s}^t {\bf X})\big]\Big\} = 
 {1 \over 2} {\bf s}^t \Sigma {\bf s}  .
$$
Hence, it is easy to show that all cumulants but the second
are equal to zero. 
The decomposition in Equation (\ref{eq: projG def}), 
${\mathbf s} = |{\mathbf s}| \times  \mbox{\boldmath
  \(\theta\)}$, implies that the cumulant function
becomes 
%\begin{equation}
%\label{genfungau}
$$
G_{|{\mathbf s}| }(\mbox{\boldmath \(\theta\)})=
\frac{|{\mathbf s}|^2}{2}k_2(\mbox{\boldmath \(\theta\)})=
\frac{|{\mathbf s}|^2}{2} \; \mbox{\boldmath \(\theta\)}^t {\bf \Sigma}\mbox{\boldmath \(\theta\)}
$$
Our optimization problem is to maximize 
$G_{|{\mathbf s}| }(\mbox{\boldmath \(\theta\)})$ under the constraint $\mbox{\boldmath \(\theta\)}^t\mbox{\boldmath \(\theta\)}=1$. 
To  find the optimal $\mbox{\boldmath \(\theta\)}_s$ defined by 
(\ref{eq: max pb}), we introduce a function 
to be maximized, constrained by the Lagrange multiplier $\lambda$, as 
$$ 
\frac{|{\mathbf s}|^2}{2} \; \mbox{\boldmath \(\theta\)}^t {\bf \Sigma}\mbox{\boldmath \(\theta\)}
- \lambda (\mbox{\boldmath \(\theta\)}^t\mbox{\boldmath \(\theta\)}-1).
$$
Setting the gradient with respect to $\mbox{\boldmath \(\theta\)}$ to zero gives 
\begin{equation}\label{eq : eigen}
2 \frac{|{\mathbf s}|^2}{2}  {\bf \Sigma}\mbox{\boldmath \(\theta\)}- 2 \lambda  \mbox{\boldmath \(\theta\)} =0.
\end{equation}
Let  $\lambda_{\Sigma} $   be  the largest eigenvalue of the covariance matrix ${\bf \Sigma}$   and $\mbox{\boldmath \(\theta\)}_{\Sigma} $ its associated eigenvector.
Introducing $\lambda_s = {|{\mathbf s}|^{2} \over 2}\lambda_{\Sigma}$, we can  write 
$
{|{\mathbf s}|^2 \over 2} {\bf \Sigma}\mbox{\boldmath \(\theta\)}_{\Sigma}=  \lambda_s  \mbox{\boldmath \(\theta\)}_{\Sigma}
$.
Consequently, $\mbox{\boldmath \(\theta\)}_{\Sigma} $ is the solution of (\ref{eq : eigen}).
Note that $\mbox{\boldmath \(\theta\)}_{\Sigma} $  depends on $\Sigma$ but
not on $|\bf s|$.
The optimal direction, for the Gaussian case, is $\mbox{\boldmath
  \(\theta\)}_s=\mbox{\boldmath \(\theta\)}_{\Sigma} $, and it corresponds to the classical first principal component.

To illustrate this result, a bivariate vector of the normal distribution is presented in Figure \ref{nfig}
(in which a contour plot is drawn in logarithmic scale). The matrix $\bf \Sigma$ is assumed to be diagonal, with entries 1.2 and 0.5143.
%%%%%%%%%%%%%%%%%%%%%%%%%%%%%%%
\begin{figure}
% \centerline{ \psfig{figure=Nfig.eps, width=16cm, angle=0}}
\includegraphics[scale=.65]{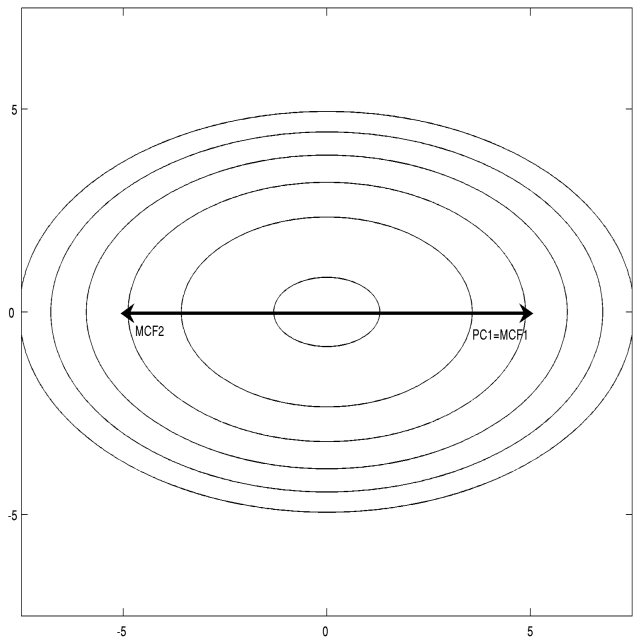}
\caption{Isoprobability contours of the multivariate
  Gaussian distribution, with zero mean and variances $1.2$ and $0.5143$
  (entries of the diagonal covariance matrix). The first principal component is shown (PC1), together
  with the two (opposite) maxima of the cumulant function (MCF1 and MCF2). All vectors
  are in arbitrary scale.
The maxima of cumulant function are parallel to the first principal component,
all pointing towards the large anomalies, in terms of high
probability (at fixed vector norm).
 Probability contours are $10^{-1}$,$10^{-3}$,$10^{-5}\ldots 10^{-11}$}
\label{nfig}
\end{figure}
%%%%%%%%%%%%%%%%%%%%%%%%%%%%%%%
The first principal component (PC1), corresponding to the eigenvalue 1.2 is horizontal, while the second principal component, corresponding to the eigenvalue 0.5143 is vertical, and is not displayed.
The two maxima of the cumulant function (MCF1 and MCF2) are just the positive and negative part of PC1.
Indeed, both $\mbox{\boldmath \(\theta\)}_{\Sigma}$ and $-\mbox{\boldmath
    \(\theta\)}_{\Sigma}$ are solutions of (\ref{eq : eigen}).
 While this is
  always true for PCA, the maxima of the cumulant function may  in general neither be 
  parallel nor orthogonal.
 
PC1 is indeed the direction along which large anomalies are ditributed in the
Gaussian case.
In order to derive PC2 and other higher principal components  from
the cumulant function, one would have  to determine not only its maxima, but also its minima and
saddle points.

%%%%%%%%%%%%%%%%%%%%%%%%%%%%%%%
\subsection{Multivariate Skew-Normal vectors}
To introduce skewness to the Gaussian density, while keeping  some of valuable properties of the normal distribution, Azzalini and his co-authors 
\citep[e.g][]{Azzalini96, Azzalini99,Gonzalez04} have extended the normal density to a larger class called   the  Skew-Normal (SN) density, that is defined as 
\begin{equation}
\label{SNdist}
f(\mathbf{x})=2\phi_\Sigma(\mathbf{x})\Phi(\mbox{\boldmath $\alpha$}^t\mathbf{x})
\end{equation}
where $\phi$ is a multivariate Normal probability density function with zero mean and
covariance matrix $\Sigma$, and $\Phi$ is the cumulative density function of an
univariate Gaussian random variable with zero mean and unit variance. 
The vector {\boldmath  $\alpha$} corresponds to the degree of skewness. 
When {\boldmath  $\alpha=0$}, there is no skewness, and the SN distribution
reduces to the Gaussian case. 
From (\ref{SNdist}), it is possible to derive 
the cumulant function  \citep[e.g][]{Azzalini96} of a SN vector 
$$
\log \Big\{ \Esp \big[ \exp( {\bf s}^t {\bf X})\big]\Big\} = 
 {1 \over 2} {\bf s}^t \Sigma {\bf s} + \log \left( 2 \Phi({\scriptstyle \sqrt{\frac{\pi}{2}}} 
\mbox{\boldmath $\mu$}^t {\bf s}) \right).
$$
where  $\mbox{\boldmath $\mu$}$  represents the mean vector, and is equal to 
$$
\mbox{\boldmath $\mu$}=\frac{\Sigma\mbox{\boldmath $\alpha$}}{\sqrt{\frac{\pi}{2}(1+\mbox{\boldmath $\alpha$}^t\Sigma\mbox{\boldmath $\alpha$})}}.
$$
Note that the covariance matrix of the SN distribution is not $\Sigma$ 
but $\Sigma-\mbox{\boldmath $\mu$}\mbox{\boldmath $\mu$}^t$  
\citep{Azzalini99}.

A bivariate example of the SN distribution is presented in Figure \ref{snfig}, in which a contour plot is drawn in logarithmic scale. The matrix  $\Sigma$  is chosen to be diagonal with entries $1.2$ and $0.5143$. 
The skewness vector $\alpha$ is taken to be equal to $(4.365,-1.455)$.
The distribution in Figure \ref{snfig} has been centered such that the mean vector is zero. In the bivariate example, {\boldmath $\mu$} is $(0.8367,-0.1195)$.
%%%%%%%%%%%%%%%%%%%%%%%%%%%%%%%%%%%%%
\begin{figure}
% \centerline{ \psfig{figure=SNfig.eps, width=16cm, angle=0}}
\includegraphics[scale=.65]{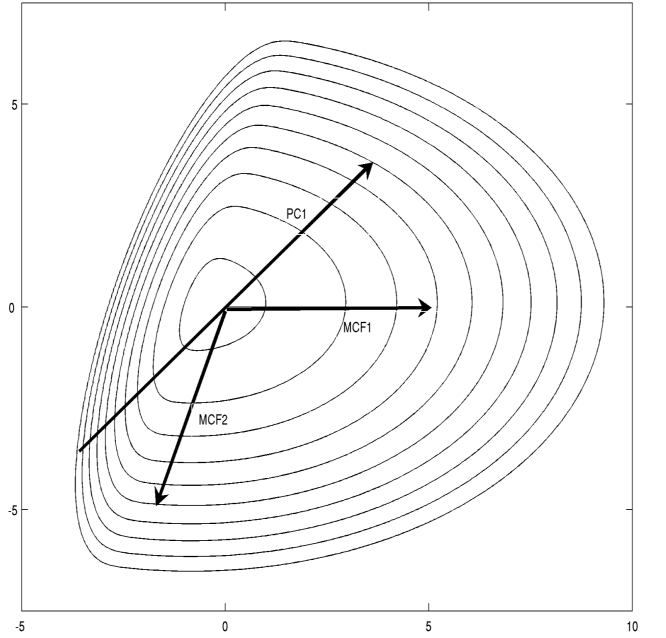}
\caption{Isoprobability contours of the multivariate
  Skew-Normal distribution, with parameters $\alpha=(4.365,-1.455)$ and
 the matrix  $\Sigma$  is chosen to be diagonal with entries $1.2$ and
 $0.5143$. 
The first principal component is shown (PC1), together
  with the two maxima of the cumulant function (MCF1 and MCF2). All vectors
  are in arbitrary scale.
In this case, the two maxima of the cumulant function are not related to
the first principal component, and point towards the (local) large anomalies, in terms of high
probability (at fixed, and large, vector norm).
Probability contours are $10^{-1}$,$10^{-3}$,$10^{-5}\ldots 10^{-19}$}
\label{snfig}
\end{figure}
%%%%%%%%%%%%%%%%%%%%%%%%%%%%%%%%%%%%%

From the SN cumulant function \citep{Azzalini99}, we can write  the cumulant function of the centered vector $\mbox{\boldmath \(\theta\)}^t({\bf X}- \mbox{\boldmath $\mu$}$)  as 
\begin{equation}
\label{genfunSN}
G_{|{\mathbf s}| }(\mbox{\boldmath \(\theta\)})=
-|{\mathbf s}|\mbox{\boldmath $\mu$}^t\mbox{\boldmath \(\theta\)}+
{|{\mathbf s}|^2 \over 2}\mbox{\boldmath \(\theta\)}^t \Sigma\mbox{\boldmath \(\theta\)}+\mbox{log}\Big[2\Phi({\scriptstyle \sqrt{\frac{\pi}{2}}}|{\mathbf s}|\mbox{\boldmath $\mu$}^t\mbox{\boldmath \(\theta\)})\Big]
\end{equation}
While it is not possible to find explicit solutions of the maximization problem defined by 
(\ref{eq: max pb}) for (\ref{genfunSN}), one can  provide valuable approximated  solutions  for both
 small  and large $|{\mathbf s}|$. 
In the former case,  the following two Taylor expansions are the key elements to derive our results
\begin{equation}\label{log(1+s)}
\mbox{log}(1+s) =  1 + s - \frac{s^2}{2}+o(s^3) \mbox{ and } 
\end{equation}
\begin{equation}\label{erf}
1+\mbox{erf}(s) =  1+\;\frac{2s}{\sqrt{\pi}}+o(s^3)
\end{equation}
where erf corresponds to  the error function defined by \\
$$
2\Phi(s)=1+\mbox{erf}\Big(\frac{s}{\sqrt{2}}\Big).
$$
Then we can write the following approximation 
\begin{eqnarray*}
\mbox{log}\Big[2\Phi({\scriptstyle \sqrt{\frac{\pi}{2}}}|{\mathbf
  s}|\mbox{\boldmath $\mu$}^t\mbox{\boldmath \(\theta\)})\Big]
  &=& \mbox{log}\Big[1+\mbox{erf}({\scriptstyle \frac{\sqrt{\pi}}{2}}|{\mathbf
  s}|\mbox{\boldmath $\mu$}^t\mbox{\boldmath \(\theta\)})\Big]  \\
  &  =  & \mbox{log}\Big[1+|{\mathbf
  s}|\mbox{\boldmath $\mu$}^t\mbox{\boldmath \(\theta\)}+o(|{\mathbf
  s}|^3)\Big] \mbox{, by (\ref{log(1+s)})}, \\
  & = & |{\mathbf
  s}|\mbox{\boldmath $\mu$}^t\mbox{\boldmath \(\theta\)}-{|{\mathbf s}|^2
  \over 2}\mbox{\boldmath \(\theta\)}^t\mbox{\boldmath $\mu$}\mbox{\boldmath
  $\mu$}^t\mbox{\boldmath \(\theta\)}+o(|{\mathbf s}|^3) \mbox{, by (\ref{erf})}.
\end{eqnarray*}
From   Equation (\ref{genfunSN}), it follows  that  the cumulant function 
$G_{|{\mathbf s}| }(\mbox{\boldmath \(\theta\)})$ 
is approximately equal to 
\begin{equation}
\label{genfunSNsmalls}
G_{|{\mathbf s}| }(\mbox{\boldmath \(\theta\)})\simeq{|{\mathbf s}|^2 \over 2}\mbox{\boldmath \(\theta\)}^t (\Sigma-\mbox{\boldmath $\mu$}\mbox{\boldmath $\mu$}^t)\mbox{\boldmath \(\theta\)}
\mbox{, for small $|{\mathbf s}|$.}
\end{equation}
As previously noticed,   the matrix $\Sigma-${\boldmath $\mu\mu$}$^t$ represents the
  covariance of the SN distribution \cite{Azzalini99}.
  Hence, the maximization of  the right hand side  of (\ref{genfunSNsmalls}) is equivalent to solving the system 
  defined by (\ref{eq : eigen}), but instead of working with $\Sigma$, we just need to replace $\Sigma$ by 
  $\Sigma-${\boldmath $\mu\mu$}$^t$ in (\ref{eq : eigen}). 
  Consequently, the solution  to maximize the SN cumulant function in the neighborhood of zero is  the largest eigenvector of the matrix $\Sigma-${\boldmath $\mu\mu$}$^t$, i.e. the PC1 of the SN covariance matrix.
  
For large $|{\mathbf s}|$, this result does not hold and different directions are obtained. 
We need to recall   the asymptotic expansion of the error
  function
$$
1+\mbox{erf}(s) \simeq 
\left\{
        \begin{array}{ll}
                -\frac{1}{s\;\sqrt{\pi}}\;\mbox{exp}(-s^2)\Big[1+o(s^{-3})\Big] & \mbox{, as $s\downarrow-\infty$,} \\
                2  & \mbox{,  as $s\uparrow+\infty$.}
        \end{array} 
        \right.
$$
If $\mbox{\boldmath
  $\mu$}^t\mbox{\boldmath \(\theta\)}<0$, the
logarithm in Equation (\ref{genfunSN}) can be expanded as
\begin{eqnarray*}
\mbox{log}\Big[2\Phi({\scriptstyle \sqrt{\frac{\pi}{2}}}|{\mathbf
  s}|\mbox{\boldmath $\mu$}^t\mbox{\boldmath \(\theta\)})\Big]
  &=&\mbox{log}\Big[1+\mbox{erf}({\scriptstyle \frac{\sqrt{\pi}}{2}}|{\mathbf
  s}|\mbox{\boldmath $\mu$}^t\mbox{\boldmath \(\theta\)})\Big]\\
&\simeq& -\frac{\pi}{2}\frac{|{\mathbf
  s}|^2}{2}\mbox{\boldmath \(\theta\)}^t \mbox{\boldmath $\mu$}\mbox{\boldmath
$\mu$}^t\mbox{\boldmath \(\theta\)}+O\big(\mbox{log}(|{\mathbf s}|^{-1})\big).
\end{eqnarray*}
In this case, we have 
$
G_{|{\mathbf s}| }(\mbox{\boldmath \(\theta\)})\simeq
{|{\mathbf s}|^2 \over 2}\mbox{\boldmath \(\theta\)}^t \Big(\Sigma-\frac{\pi}{2}\mbox{\boldmath $\mu$}\mbox{\boldmath $\mu$}^t\Big)\mbox{\boldmath \(\theta\)}.
$
The  direction that maximizes  $G_{|{\mathbf s}| }(\mbox{\boldmath
  \(\theta\)})$  is again a eigenvector, but this time is the eigevector of the matrix
$\Sigma-\frac{\pi}{2}\mbox{\boldmath $\mu$}\mbox{\boldmath $\mu$}^t$, 
pointing towards {\boldmath $\mu$}$^t\mbox{\boldmath \(\theta\)} < 0$.

If $\mbox{\boldmath
  $\mu$}^t\mbox{\boldmath \(\theta\)} \geq 0$, we have
  $$
\mbox{log}\Big[2\Phi({\scriptstyle \sqrt{\frac{\pi}{2}}}|{\mathbf
  s}|\mbox{\boldmath $\mu$}^t\mbox{\boldmath \(\theta\)})\Big]\simeq
   \left\{ \begin{array}{ll}
   \log 2 & \mbox{, $\mbox{\boldmath $\mu$}^t\mbox{\boldmath \(\theta\)} > 0$},\\
   0 & \mbox{, if $\mbox{\boldmath $\mu$}^t\mbox{\boldmath \(\theta\)}=0$}.\\
  \end{array} \right.
  $$
Then, the  cumulant function $G_{|{\mathbf s}| }(\mbox{\boldmath \(\theta\)})$ can be approximated by 
$
G_{|{\mathbf s}| }(\mbox{\boldmath \(\theta\)})\simeq
	{|{\mathbf s}|^2 \over 2}\mbox{\boldmath \(\theta\)}^t\Sigma\mbox{\boldmath \(\theta\)}
$
and maximizes by the largest eigenvector of $\Sigma$, pointing towards {\boldmath $\mu$}$^t\mbox{\boldmath \(\theta\)}\geq 0$. 

In summary, depending on the size of  $|{\mathbf s}|$ (small or large) and the sign 
{\boldmath $\mu$}$^t\mbox{\boldmath \(\theta\)}$ (positive or negative), 
the solutions  of Equation (\ref{eq: max pb}) for the SN distribution can be viewed as the largest eigenvectors of three different matrices, 
$\Sigma- ${\boldmath $\mu$}{\boldmath $\mu$}$^t$, $\Sigma$ and $\Sigma-\frac{\pi}{2}${\boldmath $\mu$}{\boldmath $\mu$}$^t$.

For the bivariate example of Figure \ref{snfig}, the PC1 is shown (in arbitrary scale), explaining 60\% of the variance. The second PC (40\% of the variance) is orthogonal to PC1 and is not displayed. 
Both maxima of the cumulant function for large $|{\mathbf s}|$, denoted as
MCF1 and MCF2 (respectively for {\boldmath $\mu$}$^t\mbox{\boldmath
  \(\theta\)}\geq0$ and $\mu$$^t\mbox{\boldmath
  \(\theta\)}<0$ ), are presented in Figure \ref{snfig} for the bivariate example (same scale as PC1).
The two local maxima point towards the large anomalies of the distribution:
this can be seen by noting that a point at the upper-right end of PC1 corresponds
to a small probability, and hence is less likely to be found, than a point at
the right end of MCF1 (the two points being of equal norm). 
Similarly, a point at the down-left end of PC1 is less likely, in probability, than a point at the down end of MCF2.

%%%%%%%%%%%%%%%%%%%%%%%%%%%%%%%
\subsection{Multivariate Gamma vectors}

This section investigates the multivariate Gamma distribution 
 defined by Cheriyan and Ramabhadran \citep[see][]{Kotz98}. 
 Each component of the data vector $\mathbf{X}$ is distributed following a Gamma distribution, and the components depend each other by means of an auxiliary variable $z$. The joint distribution is
\begin{equation}
\label{GAMpdf}
f(\mathbf{x})=\int g(z;\alpha_0)\prod_{i=1}^n g(x_i-z;\alpha_i) dz
\end{equation}
where $g$ is a gamma distribution, i.e.
$$
g(z,\alpha)=\frac{e^{-z}z^{\alpha-1}}{\Gamma(\alpha)}
$$
for $z\geq0$, equal to zero otherwise.
A bivariate example is presented in Figure \ref{gfig}, with $n=2$, $\alpha_0=2, \alpha_1=0.5$
and $\alpha_2=4$ in (\ref{GAMpdf}).
%%%%%%%%%%%%%%%%%%%%%%%%%%%%%%%
\begin{figure}[ht]
% \centerline{ \psfig{figure=GAMfig.eps, width=16cm, angle=0}}
\includegraphics[scale=.27]{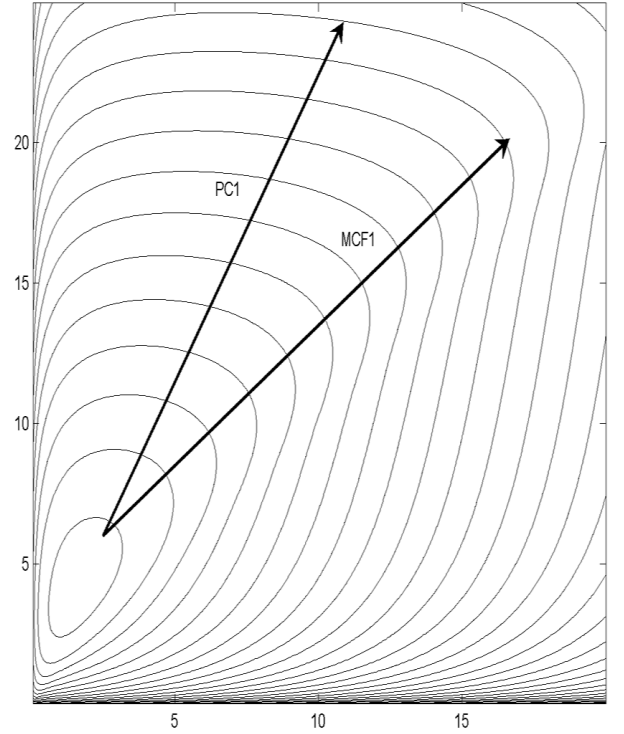}
\caption{Isoprobability contours of the multivariate
  Gamma distribution, with parameters $\alpha_0=2$, $\alpha_1=0.5$, $\alpha_2=4$.
 The first principal component is shown (PC1), together
  with the maximum of the cumulant function (MCF1). All vectors
  are in arbitrary scale.
Again, the maximum of the cumulant function is different from the first
principal component, and points towards the large anomalies, in terms of high
probability (at fixed, and large, vector norm).
Probability contours are $10^{-1/2}$,$10^{-1}$,$10^{-3/2}\ldots$}
\label{gfig}
\end{figure}
%%%%%%%%%%%%%%%%%%%%%%%%%%%%%%%

The cumulant function for this multivariate Gamma distribution can be written 
\citep{Kotz98} as 
$$
\log \Big\{ \Esp \big[ \exp( {\bf s}^t {\bf
  X})\big]\Big\}=-\alpha_0\mbox{log}\Big(1-\sum_{i=1}^n s_i\Big) 
  - \sum_{i=1}^n\alpha_i\mbox{log}(1-s_i)
$$ 
and the mean of the $i^{th}$ component
is $\mu_i = \alpha_0+\alpha_i$.
By replacing    ${\mathbf s} $ by $ |{\mathbf s}| \times  \mbox{\boldmath
  \(\theta\)}$,   the cumulant function of the centered vector $\mbox{\boldmath \(\theta\)}^t ({\bf X}- \mbox{\boldmath $\mu$})$ can be  written as 
\begin{eqnarray}
\label{genfunGAM}
G_{|{\mathbf s}| }(\mbox{\boldmath
  \(\theta\)})&=&-\alpha_0\mbox{log}\Big(1-|{\mathbf s}|\sum_{i=1}^n \theta_i\Big) \nonumber \\ 
  &-& \sum_{i=1}^n\alpha_i\mbox{log}(1-|{\mathbf s}|\theta_i)-|{\mathbf s}|\sum_{i=1}^n (\alpha_0+\alpha_i)\theta_i .
\end{eqnarray}
For small $|{\mathbf s}|$, the logarithms are approximated by the
  truncated Taylor expansions, i.e. 
$$
\sum_{i=1}^n\alpha_i\mbox{log}\big(1-|{\mathbf s}|\theta_i\big)
\simeq
-|{\mathbf
  s}|\sum_{i=1}^n\alpha_i\theta_i+\frac{|{\mathbf s}|^2}{2}\sum_{i=1}^n\alpha_i\theta_i^2
$$
and 
$$
\alpha_0\mbox{log}\big(1-|{\mathbf s}|\sum_{i=1}^n
  \theta_i\big)\simeq  -|{\mathbf s}|\sum_{i=1}^n
  \alpha_0\theta_i+\frac{|{\mathbf s}|^2}{2}\alpha_0\Big(\sum_{i=1}^n
  \theta_i\Big)^2.
$$
It follows that the cumulant function can be approximated by 
\begin{equation}
\label{genfunGAMsmalls}
G_{|{\mathbf s}| }(\mbox{\boldmath \(\theta\)})\simeq{|{\mathbf s}|^2 \over 2}\mbox{\boldmath \(\theta\)}^t C\mbox{\boldmath \(\theta\)}, 
\end{equation}
where the covariance matrix $C$ is defined by \cite{Kotz98}
$$
C=\left( \begin{array}{lll}\alpha_0+\alpha_1& & \alpha_0 \\ & \ddots & \\ \alpha_0 & & \alpha_0+\alpha_n \end{array} \right).
$$
Hence, for small $|{\mathbf s}|$, the solution of (\ref{eq: max pb}) is the classical PC1  eigenvector of the covariance matrix $C$ associated with the largest eigenvalue. It is plotted in Figure \ref{gfig} for the bivariate example. The negative part of PC1 is not displayed, as well as the second principal component which is just orthogonal to the first.

For large $|{\mathbf s}|$, the cumulant function is not defined, since the
logarithms in Equation (\ref{genfunGAM}) must have positive arguments, for all
unit vectors $\mbox{\boldmath \(\theta\)}$.
In particular, the following two inequalities have to be satisfied
$$
|{\mathbf s}|\theta_i<1
\mbox{ and }
|{\mathbf s}|\sum_{i=1}^n\theta_i<1.
$$
In other words, 
$$
|{\mathbf s}| <   \min \left( \min \left( {1 \over \theta_i} \right), {1 \over\sum \theta_i} \right).
$$
However, we take the largest allowed value of $|{\mathbf s}|$, which is
considered as a valid limit.
Since $\mbox{\boldmath \(\theta\)}$ is a unit vector, the maximum of each
component $\theta_i$ is $1$, which holds when all the other components are zero, while the maximum for the sum $\sum \theta_i$ is $\sqrt{n}$, which holds when $\theta_1=\theta_2=\ldots=\theta_n=1/\sqrt{n}$. 
The largest allowed value of $|{\mathbf s}|$ is then $1/\sqrt{n}$: in that case $G$ remains finite for all $\mbox{\boldmath \(\theta\)}$'s, except for $\theta_1=\theta_2=\ldots=\theta_n=1/\sqrt{n}$, where it diverges due to the first logarithm of Equation (\ref{genfunGAM}) (all the others remain finite).
For larger values of $|{\mathbf s}|$, $G$ diverges over subspaces larger than a single point.
Hence the boundary case $|{\mathbf s}|=1/\sqrt{n}$ is taken as representative for a ``large'' $|{\mathbf s}|$ limit, and the point $\theta_1=\theta_2=\ldots=\theta_n=1/\sqrt{n}$ is taken as the maximum of the cumulant function.

For the bivariate example, the maximum is plotted in Figure \ref{gfig}, denoted as MCF1, and scaled to PC1.
Note that for high values of the probability distribution (e.g. the smallest contour), PC1 seems a representative direction of the egg-like shape of the distribution, but for low probabilities it becomes clear that the line $\theta_1=\theta_2$ is responsible for the large deviations.
A point at the end of PC1 is indeed less probable than a point at the end of MCF1.

This result is understood by noting that the joint density (\ref{GAMpdf}) is the probability distribution of variables $x_i$ defined as $x_i=z_0+z_i$, where the variables $z_0,z_1,\ldots,z_n$ are independent and Gamma distributed with parameters $\alpha_0,\alpha_1,\ldots,\alpha_n$.
Hence, a large deviation of $z_0$, which occurs independently on others $z$'s,
corresponds to a large deviation of $\mathbf{x}$ which is placed on average
along the line $x_1=x_2=\ldots=x_n$ (that corresponds to
$\theta_1=\theta_2=\ldots=\theta_n$ for the cumulant function).

%%%%%%%%%%%%%%%%%%%%%%%%%%%%%%%%%%%%%%%%%
\section{Maximizing the marginal density tail}
\label{sec: mmdt}
We have seen that the multivariate cumulant function reduces to the variance
if the radius $|{\mathbf s}|$ tends to zero.
In that case, its maxima corresponds to the first principal component of data set.
If $|{\mathbf s}|$ grows, higher order cumulants come into play, but is not
clear what the corresponding maxima represent.
In order to clarify this point, we rewrite the cumulant function defined by 
(\ref{eq: projG def})  in terms of
the explicit integral over the probability density
$$
G_{|{\mathbf s}|}(\mbox{\boldmath \(\theta\)})
% =\log\Big\{\Esp\big[\exp(|{\mathbf s}|\; \mbox{\boldmath \(\theta\)}^t{\mathbf X})\big]\Big\}
=\log\int_{\reels^n} f({\mathbf x})\exp(|{\mathbf s}|\; \mbox{\boldmath \(\theta\)}^t{\mathbf x}) d{\mathbf x}
$$
This expression can be reduced to an unidimensional integral, by defining the
projected data as $Z=\mbox{\boldmath \(\theta\)}^t{\mathbf X}$, and the marginal probability density of the
projected data, $f_{\mbox{\boldmath \(\theta\)}}(z)$.
The cumulant function is then

\begin{equation}
G_{|{\mathbf s}|}(\mbox{\boldmath \(\theta\)})=\log\int_{-\infty}^{+\infty} 
f_{\mbox{\boldmath \(\theta\)}}(z)\exp(|{\mathbf s}|z) dz,
\end{equation}
which corresponds to the cumulant function of the univariate 
vector $Z=\mbox{\boldmath \(\theta\)}^t{\mathbf X}$ with distribution density $f_{\mbox{\boldmath \(\theta\)}}(z)$.
In the light of this representation, our maximization procedure is better understood:
we are looking for directions {\boldmath \(\theta\)} that correspond to a
marginal probability density $f_{\mbox{\boldmath \(\theta\)}}(z)$ displaying
maximal cumulant function, at fixed $|{\mathbf s}|$.
We want to demonstrate that if $|{\mathbf s}|$ grows, a larger cumulant function corresponds to a marginal
density with a fatter tail.
Hence our procedure
selects the directions corresponding to the marginal densities with fatter tails, where the
anomalous behaviour is expected.

Specifically, consider two different directions {\boldmath \(\theta\)} and {\boldmath
  \(\theta\)}$'$: we want to demonstrate that if the marginal distribution
along {\boldmath \(\theta\)} has a fatter tail than the distribution along
{\boldmath \(\theta\)}$'$, then the cumulant function has also a fatter tail along
{\boldmath \(\theta\)} respect to {\boldmath \(\theta\)}$'$.
More formally, we have the following theorem.
\newtheorem{theorem}{Theorem}
\begin{theorem}
Let ${\mbox{\boldmath \(\theta\)}}$ and ${\mbox{\boldmath \(\theta\)}'}$ be two directions. 
If there exists a real $z^*$ such that the density distribution $f_{\mbox{\boldmath \(\theta\)}}(z)$
of the random variable 
$\mbox{\boldmath \(\theta\)}^t{\mathbf X}$
 is strictly larger than the density   $f_{\mbox{\boldmath \(\theta\)}'}(z)$ for all $z>z^*$, i.e.
 $$
 f_{\mbox{\boldmath \(\theta\)}}(z)>f_{\mbox{\boldmath
    \(\theta\)}'}(z),  \;\;  \mbox{ for all $z>z^*$}, 
 $$
then there exists a radius $|{\mathbf s}|^*$ such the cumulant function of 
$\mbox{\boldmath \(\theta\)}^t{\mathbf X}$ 
and $\mbox{\boldmath \(\theta\)}'^t{\mathbf X}$
satisfies 
$$
G_{|{\mathbf    s}|}(\mbox{\boldmath \(\theta\)})>G_{|{\mathbf s}|}(\mbox{\boldmath
  \(\theta\)}'), \;\;   \mbox{ for all $ |{\mathbf s}|>|{\mathbf s}|^*$.}
$$
\end{theorem}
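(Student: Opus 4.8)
Since $t\mapsto\log t$ is strictly increasing, the plan is to recast the claimed inequality $G_{|{\mathbf s}|}(\mbox{\boldmath \(\theta\)})>G_{|{\mathbf s}|}(\mbox{\boldmath \(\theta\)}')$ as the statement that
$$
D(|{\mathbf s}|)\;:=\;\int_{-\infty}^{+\infty}\big[f_{\mbox{\boldmath \(\theta\)}}(z)-f_{\mbox{\boldmath \(\theta\)}'}(z)\big]\,e^{|{\mathbf s}|z}\,dz\;>\;0,
$$
and then to show that $D(|{\mathbf s}|)>0$ for all sufficiently large $|{\mathbf s}|$. Writing $h(z)=f_{\mbox{\boldmath \(\theta\)}}(z)-f_{\mbox{\boldmath \(\theta\)}'}(z)$, I would split the integral at the threshold $z^*$, $D(|{\mathbf s}|)=I_+(|{\mathbf s}|)+I_-(|{\mathbf s}|)$ with $I_+=\int_{z^*}^{\infty}h(z)e^{|{\mathbf s}|z}dz$ and $I_-=\int_{-\infty}^{z^*}h(z)e^{|{\mathbf s}|z}dz$. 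By hypothesis $h>0$ on $(z^*,\infty)$, so $I_+\ge0$, whereas the sign of $I_-$ is uncontrolled; the whole point will be that $I_+$ dominates $|I_-|$ exponentially in $|{\mathbf s}|$.

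For the lower bound on $I_+$, the idea is to anchor the estimate strictly beyond $z^*$: fix reals $a,b$ with $z^*<a<b$ and put $C:=\int_a^b h(z)\,dz$, which is strictly positive because $h$ is measurable and pointwise strictly positive on $[a,b]$. Then $I_+(|{\mathbf s}|)\ge\int_a^b h(z)e^{|{\mathbf s}|z}dz\ge C\,e^{|{\mathbf s}|a}$. For the upper bound on $|I_-|$, use $e^{|{\mathbf s}|z}\le e^{|{\mathbf s}|z^*}$ on $(-\infty,z^*)$ and $\int_{-\infty}^{z^*}|h|\le\int_{\reels}(f_{\mbox{\boldmath \(\theta\)}}+f_{\mbox{\boldmath \(\theta\)}'})=2$, so $|I_-(|{\mathbf s}|)|\le2\,e^{|{\mathbf s}|z^*}$. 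Combining the two estimates, $D(|{\mathbf s}|)\ge e^{|{\mathbf s}|z^*}\big(C\,e^{|{\mathbf s}|(a-z^*)}-2\big)$, which is positive as soon as $e^{|{\mathbf s}|(a-z^*)}>2/C$, i.e. for $|{\mathbf s}|>|{\mathbf s}|^*:=\max\{0,\,(a-z^*)^{-1}\log(2/C)\}$. This is exactly the conclusion, so the argument closes.

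I expect the genuinely delicate points to be bookkeeping rather than deep. First, one must invoke the paper's standing assumption that the cumulant function is everywhere well defined to know that $I_+$ and the full integrals defining $G_{|{\mathbf s}|}$ are finite, so that the splitting $D=I_++I_-$ is legitimate (all the estimates used in the bounds involve only a bounded interval or an $L^1$ quantity, so they are automatically fine). Second, the hypothesis of strict inequality is used only to secure $C=\int_a^b h>0$; a weaker "$h\ge0$ on a tail and not a.e.\ zero there" would do just as well. The conceptual heart — and the step I would state most carefully — is the comparison of exponential growth rates: it is the freedom to take the lower-bound anchor $a$ strictly larger than $z^*$ that lets the tail contribution $I_+\sim e^{|{\mathbf s}|a}$ outgrow the crude bound $|I_-|\lesssim e^{|{\mathbf s}|z^*}$ on the remainder.
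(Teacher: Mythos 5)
Your proof is correct, and while it shares the paper's basic decomposition of $D(|{\mathbf s}|)=I_++I_-$ at the threshold $z^*$, the mechanism by which you make $I_+$ dominate is genuinely different and, arguably, cleaner. The paper lower-bounds $I_+$ by $e^{|{\mathbf s}|z^*}\int_{z^*}^{\infty}h$, uses the normalization of the two densities to rewrite that tail mass as the deficit $\int_{-\infty}^{z^*}(-h)$, and then argues that $\int_{-\infty}^{z^*}e^{|{\mathbf s}|(z-z^*)}(-h)\,dz\to 0$ as $|{\mathbf s}|\to\infty$ (a dominated-convergence step), so that $-I_-$ eventually drops below the fixed positive constant; the resulting $|{\mathbf s}|^*$ is non-constructive (and the paper's phrase "the largest value of $|{\mathbf s}|$ for which the two integrals are equal" is slightly delicate, since such a value need not exist). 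You instead anchor the lower bound at $a>z^*$ to get $I_+\ge C\,e^{|{\mathbf s}|a}$ with a strictly larger exponential rate than the crude $L^1$ bound $|I_-|\le 2\,e^{|{\mathbf s}|z^*}$, which removes the limit argument entirely, never needs the normalization identity beyond $\int|h|\le 2$, and yields an explicit threshold $|{\mathbf s}|^*=\max\{0,(a-z^*)^{-1}\log(2/C)\}$. What the paper's route buys is a slightly more symmetric presentation directly in terms of the two tail masses; what yours buys is an elementary, fully quantitative bound. Both arguments rely on the standing assumption that the moment generating functions are finite so the splitting is legitimate, which you correctly flag.
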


\begin{proof}
In order to prove the result, we start by noting that the following inequality holds
$$
\exp(|{\mathbf s}|z)\big[f_{\mbox{\boldmath \(\theta\)}}(z)-f_{\mbox{\boldmath \(\theta\)}'}(z)\big]
>  \exp(|{\mathbf s}|z^*)\big[f_{\mbox{\boldmath \(\theta\)}}(z)-f_{\mbox{\boldmath \(\theta\)}'}(z)\big]
$$
for all $  z>z^*$,
where we have replaced the exponential function with its minimum value in the interval $z\in(z^*,+\infty)$.
The inequality holds because the density difference  $f_{\mbox{\boldmath
    \(\theta\)}}(z)-f_{\mbox{\boldmath \(\theta\)}'}(z)$ is positive in this interval, by assumption.
Since the above inequality holds in the whole interval $z\in(z^*,+\infty)$, it can be integrated over, i.e.
\begin{eqnarray*}
\int_{z^*}^{+\infty} \exp(|{\mathbf s}|z)\big[f_{\mbox{\boldmath \(\theta\)}}(z)-f_{\mbox{\boldmath \(\theta\)}'}(z)\big] dz > \\
\;\;\;\;\;\; \exp(|{\mathbf s}|z^*)\int_{z^*}^{+\infty} \big[f_{\mbox{\boldmath \(\theta\)}}(z)-f_{\mbox{\boldmath \(\theta\)}'}(z)\big] dz.
\end{eqnarray*}
Given that the two densities are normalized, i.e.

$$
\int_{-\infty}^{+\infty}f_{\mbox{\boldmath \(\theta\)}}(z)dz=\int_{-\infty}^{+\infty}f_{\mbox{\boldmath \(\theta\)}'}(z)dz=1
$$
we can rewrite the right hand
side (r.h.s.) of the last  inequality 
by inverting the integration order and the marginal densities as
\begin{eqnarray}
\exp(|{\mathbf s}|z^*)\int_{z^*}^{+\infty} \big[f_{\mbox{\boldmath
    \(\theta\)}}(z)-f_{\mbox{\boldmath \(\theta\)}'}(z)\big] dz=\nonumber \\ \exp(|{\mathbf s}|z^*)\int_{-\infty}^{z^*}\big[f_{\mbox{\boldmath \(\theta\)}'}(z)-f_{\mbox{\boldmath \(\theta\)}}(z)\big] dz\nonumber
\end{eqnarray}
Rearranging the four terms gives indeed the normalization condition times the exponential.
Note that since the integral in the left hand side (l.h.s.) is, by assumption, positive, the integral in the
r.h.s. must be positive as well, because they are multiplied by the same
positive constant $\exp(|{\mathbf s}|z^*)$.

Given the above inequalities, we only need to demonstrate that it exists an
$|{\mathbf s}|^*$ such that, for all $ |{\mathbf s}|>|{\mathbf s}|^*$,
\begin{eqnarray}
\label{in3}
\exp(|{\mathbf s}|z^*)\int_{-\infty}^{z^*} \big[f_{\mbox{\boldmath \(\theta\)}'}(z)-f_{\mbox{\boldmath \(\theta\)}}(z)\big] dz
>\\
\int_{-\infty}^{z^*}\exp(|{\mathbf s}|z)\big[f_{\mbox{\boldmath \(\theta\)}'}(z)-f_{\mbox{\boldmath \(\theta\)}}(z)\big] dz\nonumber 
\end{eqnarray}
where the value of $|{\mathbf s}|^*$ must be determined.
Note that even if the integral in the l.h.s. is positive,  the density difference
$f_{\mbox{\boldmath \(\theta\)}'}(z)-f_{\mbox{\boldmath \(\theta\)}}(z)$  is not guaranteed to be
positive for all $z\in (-\infty,z^*)$.
If the integrand was positive as well, Equation (\ref{in3}) would hold
trivially for all
values of $|{\mathbf s}|$, because the exponential in the l.h.s. is always
larger than that of r.h.s.
This corresponds to the case in which, beside $f_{\mbox{\boldmath \(\theta\)}}(z)>f_{\mbox{\boldmath
    \(\theta\)}'}(z)\;\;\;\forall z>z^*$, we assume also $f_{\mbox{\boldmath \(\theta\)}}(z)<f_{\mbox{\boldmath
    \(\theta\)}'}(z)\;\;\;\forall z<z^*$.
However, we do not need this additional request, and we just note that it
would help our procedure by leaving the problem of using a large $|{\mathbf s}|$.

Since the exponential is positive, we can rewrite Equation (\ref{in3}) as
\begin{eqnarray*}
\int_{-\infty}^{z^*}\big[f_{\mbox{\boldmath \(\theta\)}'}(z)-f_{\mbox{\boldmath \(\theta\)}}(z)\big] dz
>\\
\int_{-\infty}^{z^*}\exp\big(|{\mathbf s}|(z-z^*)\big)\big[f_{\mbox{\boldmath \(\theta\)}'}(z)-f_{\mbox{\boldmath \(\theta\)}}(z)\big] dz
\end{eqnarray*}
for all $|{\mathbf s}|>|{\mathbf s}|^*$.
The integral in the l.h.s. is positive, as stated above, and is
independent on $|{\mathbf s}|$, while the integral in the r.h.s. converges to
zero for $|{\mathbf s}|\longrightarrow+\infty$, as long as the density
difference remains finite, because the exponential tends to zero in the whole
interval $z\in (-\infty,z^*)$.
If we define $|{\mathbf s}|^*$ as the largest possible value of $|{\mathbf s}|$
for which the two integrals are equal, then for all $|{\mathbf s}|>|{\mathbf
  s}|^*$ the integral in the l.h.s. is larger than that of r.h.s.

Finally, we have for all $|{\mathbf s}|>|{\mathbf s}|^*$
\begin{eqnarray*}
\int_{z^*}^{+\infty}dz\;\exp(|{\mathbf s}|z)\big[f_{\mbox{\boldmath \(\theta\)}}(z)-f_{\mbox{\boldmath \(\theta\)}'}(z)\big]
>\\
\int_{-\infty}^{z^*}dz\;\exp(|{\mathbf s}|z)\big[f_{\mbox{\boldmath \(\theta\)}'}(z)-f_{\mbox{\boldmath \(\theta\)}}(z)\big].
\end{eqnarray*}
By rearranging terms, this is equivalent to $G_{|{\mathbf
    s}|}(\mbox{\boldmath \(\theta\)})>G_{|{\mathbf s}|}(\mbox{\boldmath
  \(\theta\)}')$, and the theorem is demonstrated.

\end{proof}

\section*{Discussion}

In this paper, we have introduced a novel method selecting the
spatial patterns representative for the large deviations in
the dataset. 
The method consists in finding the vectors in the space of data for
which the cumulant function is maximal.
As in the case of PCA, the spatial patterns are found as
normalized directions in the space of data, and a linear projection can be
performed, with an easy geometrical
interpretation.
If one is interested on the large deviations, the projection allows to safely
perform Extreme Value Analysis (\cite{Coles01}). 
In both cases the subspaces are ordered:
in PCA the order follows the fraction of variance of each
subspace; The maxima of the cumulant function are ordered by the 
value of $G$. 
However, while PCA accounts for the mass of the distribution, the
cumulant function can cover the full range of the distribution.

Principal components are always symmetric, while large anomalous patterns, 
if generated by nonlinear processes, are expected to be neither specular nor orthogonal.
Accordingly, the maxima of the cumulant function are not necessarily symmetric, since they account
for the whole structure of dependencies, and not only covariances.
Other nonsymmetric techniques, such as oblique Varimax rotations, generally depend on the 
frame of reference: vector solutions do not covary with the space of data under orthogonal
transformations. 
Hence, solutions depend not only on the shape of the underlying probability distribution, 
but also on its orientation: an undesirable property for our purposes.
The maxima of the cumulant function, instead, vary with the probability
density whose shape is the only feature determining the maxima.

In the case of normally distributed data, the maximization of
 cumulant function yields the first principal component for all values
 of $|s|$: the elliptically symmetric distribution is characterized by the two tails along 
the major axis of the ellipse, i.e. the first principal component.
When the method is applied to Skew-Normal and Gamma distributions, for
non-small $|s|$, the maxima of the cumulant function determine large anomalies: 
high probability directions far from the center of mass.
Note that the limit radius $|s|\rightarrow\infty$ is the innovative key from a technical point of view, 
allowing for analytical solutions.
Using the limit, we were also able to demonstrate that the solutions of our
algorithm correspond, in general, to the directions along which the marginal
probability density display the fattest tails.

Using the cumulant function is computationally cheap, there is no
 free parameter, and has the advantage of searching for local solutions, all of which are of interest.
When a solution is found, is always a good solution, in contrast with
 neural networks applications, for which it must be questioned if it
 is the global or just a local solution.
In real applications \citep[see][]{Bernacchia07}, the radius $|s|$ must be taken
as large as possible, until the expected error in
the estimate of the cumulant function, 
due to the finite sample, reach a tolerance value \citep[see][]{Bernacchia07}.
This corresponds to maximize a combination of cumulants which is 
of the highest reliable order with the given amount of data, 
accounting for the available set of anomalies.

The solutions of our algorithm are expected to transform continuously as the
radius $|s|$ varies.
Hence, even if the limit  $|s|\rightarrow\infty$ cannot be taken in practice,
the solutions for a finite value of $|s|$ are expected to represent a substantial
departure from the PCA solution, towards the formal solution at
$|s|\rightarrow\infty$.
From the theoretical point of view, future work could be devoted to studying
more in detail the nature of solutions at varying $|s|$.
For instance, one could attempt to find under which conditions and to what
extent the solutions are in between the PCA solution and the formal solution at infinite $|s|$.
From the applicative point of view, we expect several datasets 
 \citep[e.g.][]{Bernacchia07} to be
fruitfully analyzed with our new method.
 
Note that the logarithm is taken for illustrative purposes: the moment
generating function could be used instead of the cumulant
function, since the maximization is invariant under application of
a monotonous function.
The present definition is however confortable in avoiding extremely large numbers.
Centering of data about the mean is also a practical step, related with the constraint
of dealing with finite samples: if the limit $|s|\rightarrow\infty$ could be really 
taken, the mean would be irrelevant.

Results of our procedure are corrupted if variables are standardized by a rescaling,
since the relative scale of different directions is the key in detecting
anomalies and comparing the size of tails.
If variables are standardized, our procedure reduce to a special case of
Independent Component Analyisis (ICA, see \cite{Hyvarinen07}), detecting independent components rather
than large anomalies.
Results are also corrupted if we try to get an empirical estimate of the
cumulant function when the underlying probability density decays less than
exponentially fast.
In that case, the cumulant function diverge, and the variance of the empirical estimate
increases with the size of the sample (\cite{Sornette00}), implying that the estimate is always unreliable.

\begin{acknowledgements}
 This work  was supported by the european E2-C2 grant, the National Science Foundation (grant: NSF-GMC (ATM-0327936)), by The Weather and Climate Impact Assessment Science Initiative 
at the  National Center for Atmospheric Research 
(NCAR) and the ANR-AssimilEx project.
 The authors would also like to credit the contributors of the R project. 
Finally, we would like to thank Isabella Bordi, Marcello Petitta and Alfonso Sutera for valuable discussions.
\end{acknowledgements}

%\newpage

%% If figures and tables must be numbered 1a, 1b, etc. the following command
%% should be inserted before the begin{} command.

\addtocounter{figure}{-1}\renewcommand{\thefigure}{\arabic{figure}a}

\end{document}